\providecommand{\U}[1]{\protect\rule{.1in}{.1in}}
\newtheorem{theorem}{Theorem}
\newtheorem{lemma}[theorem]{Lemma}
\newenvironment{proof}[1][Proof]{\noindent\textbf{#1.} }{$\hfill\Box$}
\begin{document}

\title{A quasilinear problem with fast growing gradient{\thanks{2010 Mathematics
Subject Classification: 35B09, 35J66, 35J70, 35J92}}}
\author{\textbf{{\large Hamilton Bueno and Grey Ercole}}\thanks{The authors were
supported in part by FAPEMIG and CNPq, Brazil.}\\\textit{{\small Departamento de Matem\'{a}tica}}\\\textit{{\small Universidade Federal de Minas Gerais}}\\\textit{{\small Belo Horizonte, Minas Gerais, 30.123.970, Brazil}}\\\textit{{\small e-mail: hamilton@mat.ufmg.br}}, \thinspace
\textit{{\small grey@mat.ufmg.br}}}
\maketitle

\begin{abstract}
In this paper we consider the following Dirichlet problem for the
$p$-Laplacian in the positive parameters $\lambda$ and $\beta$:
\[
\left\{
\begin{array}
[c]{rcll}%
-\Delta_{p}u & = & \lambda h(x,u)+\beta f(x,u,\nabla u) & \text{in }\Omega\\
u & = & 0 & \text{on }\partial\Omega,
\end{array}
\right.  \hfill
\]
where $h,f$ are continuous nonlinearities satisfying $0\leq\omega
_{1}(x)u^{q-1}\leq h(x,u)\leq\omega_{2}(x)u^{q-1}$ with $1<q<p$ and $0\leq
f(x,u,v)\leq\omega_{3}(x)u^{a}|v|^{b}$, with $a,b>0$, and $\Omega$ is a
bounded domain of $\mathbb{R}^{N},$ $N\geq3.$ The functions
$\omega_{i}$, $1\leq i\leq3$, are nonnegative, continuous weights in
$\overline{\Omega}$. We prove that there exists a region $\mathcal{D}$ in the
$\lambda\beta$-plane where the Dirichlet problem has at least one positive
solution. The novelty in this paper is that our result is valid for
nonlinearities with growth higher than $p$ in the gradient variable.\medskip

\end{abstract}

%{2010 Mathematics Subject Classification: 35B09, 35J66, 35J70, 35J92}

\textit{keywords:} $p$-Laplacian, positive solution, nonlinearity depending on
the gradient, sub- and super-solution method.

\section{Introduction}

Dependence on the gradient in problems involving quasilinear operators as the
$p$-Laplacian have been challenging researchers of elliptic PDE's in questions
of existence and uniqueness. The approach used to handle this problems varies,
ranging from change of variables in order to eliminate the dependence on the
gradient to a combination of topological and blow-up arguments \cite{ubilla,
BEFZ, figubilla, ILS}. In a nutshell, no general method to deal with this kind
of problem has been established.

In this paper we intend to show how simple techniques (sub- and super-solution
method combined with a global estimate on the gradient) are able to solve some
quasilinear problems involving nonlinearities with fast growing gradient, that
is, nonlinearities where the exponent of $\left\vert \nabla u\right\vert $ is
greater than $p$. This type of problem is rare in the literature. The method
we choose allows us to make simple hypotheses, also in contrast with papers in
the area.

We consider the following Dirichlet problem in the positive parameters
$\lambda$ and $\beta:$
\[
\left\{
\begin{array}
[c]{rcll}%
-\Delta_{p}u & = & \lambda h(x,u)+\beta f(x,u,\nabla u) & \text{in }\Omega\\
u & = & 0 & \text{on }\partial\Omega.
\end{array}
\right.  \hfill
\]

Our hypotheses on $f$ (see below) include nonlinearities that depend on the
gradient with an exponent higher than $p$ and thus the application of
variational methods (even in combination with topological techniques, see
\cite{djairo}) can not handle directly this kind of problem. Known versions of
the sub- and super-solution method developed for equations depending on the
gradient (see \cite{Boccardo, mabel}) require nonlinearities with the gradient
term growing at most as $\left\vert \nabla u\right\vert ^{p}.$

Here, inspired by the classical paper of Ambrosetti, Brezis and Cerami
\cite{ABC} we define a fixed point operator for each $\left(  \lambda
,\beta\right)  $ in a region $\mathcal{D}$ of the $\lambda\beta$-plane and use
global $C^{1,\alpha}$ estimates on the solution of the Poisson
equation$-\Delta_{p}u=g$ with Dirichlet boundary conditions on $\Omega$ to
obtain an invariant subset by this operator. \ Hence, by applying Schauder's
fixed point theorem we prove the existence of at least one positive solution
for the Dirichlet problem above if $\left(  \lambda,\beta\right)
\in\mathcal{D}.$

%%%%%%%%%%%%%%%%%%%%%%%%%%%%%%%%%%%%%%%%%%%%%%%%%%%%%%

\section{Existence of a positive solution}

In this section we consider the existence of positive solutions for the
following problem in two positive parameters in the bounded, smooth domain
$\Omega\subset\mathbb{R}^{N}:$%

\begin{equation}
\left\{
\begin{array}
[c]{rcll}%
-\Delta_{p}u & = & \lambda h(x,u)+\beta f(x,u,\nabla u) & \text{in }\Omega\\
u & = & 0 & \text{on }\partial\Omega,
\end{array}
\right.  \label{twotwo}%
\end{equation}
where $\Delta_{p}u:=\operatorname{div}(\left\vert \nabla u\right\vert
^{p-2}\nabla u)$ is the $p$-Laplacian operator, $p>1,$ and $h,f$ are
continuous nonlinearities satisfying

\begin{enumerate}
\item[(H1)] $0\leq\omega_{1}(x)u^{q-1}\leq h(x,u)\leq\omega_{2}(x)u^{q-1}$,
$1<q<p$;

\item[(H2)] $0\leq f(x,u,v)\leq\omega_{3}(x)u^{a}|v|^{b}$, $a,b>0,$
\end{enumerate}

\noindent and $\omega_{i}\colon\overline{\Omega}\rightarrow\lbrack0,\infty)$,
$1\leq i\leq3$, are nonnegative continuous functions (with $\omega
_{i}\not \equiv 0$) that we call \emph{weights}.

We begin establishing a version of a result on the regularity of solutions of
the $p$-Laplacian, which was proved by Tolksdorf \cite{Tolks} and Liebermann
\cite{LIEBERMAN}. The proof is given, since the result is not explicitly stated
in those papers.
\begin{lemma}
\label{Lem1}\textit{Let }$\Omega$\textit{ be a bounded, smooth domain of
}$\mathbb{R}^{N}$\textit{ and }$g\in L^{\infty}(\Omega).$\textit{ Assume that
}$u\in W_{0}^{1,p}(\Omega)$\textit{ is a weak solution of}
\begin{equation}
\left\{
\begin{array}
[c]{rcll}%
-\Delta_{p}u & = & g & \text{in }\ \Omega,\\
u & = & 0 & \text{on }\ \partial\Omega.
\end{array}
\right.  \label{Dirig}%
\end{equation}

\end{lemma}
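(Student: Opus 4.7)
Since the statement in the excerpt is truncated, I read the natural conclusion from the introduction: there exist $\alpha\in(0,1)$ and $C>0$, depending only on $\Omega$, $p$ and $N$, such that $u\in C^{1,\alpha}(\overline{\Omega})$ with
\[
\|u\|_{C^{1,\alpha}(\overline{\Omega})}\le C\,\|g\|_{L^{\infty}(\Omega)}^{1/(p-1)}.
\]
The plan is to combine a standard $L^{\infty}$ estimate for weak solutions of the Poisson-type $p$-Laplacian equation with the global $C^{1,\alpha}$ regularity results of Tolksdorf and Lieberman, and then extract the explicit dependence on $\|g\|_{\infty}$ via a homogeneity/scaling argument.

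First I would establish an a priori bound $\|u\|_{L^{\infty}(\Omega)}\le C_{1}\|g\|_{\infty}^{1/(p-1)}$. This is obtained either by a Moser-type iteration on the weak formulation of \eqref{Dirig} (testing with $|u|^{k-1}u$ truncations and bootstrapping), or, more simply, by comparison with the radial solution of $-\Delta_{p}w=\|g\|_{\infty}$ in a large ball containing $\Omega$ with zero boundary data, whose explicit form shows the $(p-1)$-th root scaling. Once this $L^{\infty}$ bound is in hand, the equation $-\Delta_{p}u=g$ has a bounded right-hand side and a bounded solution, which is exactly the setting in which Tolksdorf's interior estimate \cite{Tolks} and Lieberman's boundary estimate \cite{LIEBERMAN} apply; patching the two gives $u\in C^{1,\alpha}(\overline{\Omega})$ with a bound of the form $\|u\|_{C^{1,\alpha}(\overline{\Omega})}\le C_{2}\bigl(\|u\|_{\infty},\|g\|_{\infty},\Omega,p,N\bigr)$.

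To turn this into the clean estimate above, I would exploit the $(p-1)$-homogeneity of $-\Delta_{p}$. Setting $K:=\|g\|_{\infty}^{1/(p-1)}$ (assuming $g\not\equiv 0$; the case $g\equiv 0$ forces $u\equiv 0$ by uniqueness) and $v:=u/K$, one checks that $v$ is a weak solution of $-\Delta_{p}v=\tilde{g}$ in $\Omega$ with $v=0$ on $\partial\Omega$, where $\tilde{g}:=g/K^{p-1}$ satisfies $\|\tilde{g}\|_{\infty}\le 1$. Applying the $L^{\infty}$ bound and the Tolksdorf--Lieberman estimates to $v$ produces $\|v\|_{C^{1,\alpha}(\overline{\Omega})}\le C$ with $C$ depending only on $\Omega$, $p$ and $N$; scaling back to $u$ yields the claimed bound. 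The main obstacle will not be any single step but rather verifying that the constants in the cited regularity theorems can indeed be taken to depend only on the stated quantities (in particular, that they remain uniform once $\|g\|_{\infty}$ and $\|v\|_{\infty}$ are normalized to $1$), which is implicit in \cite{Tolks,LIEBERMAN} but, as the authors note, is not stated as a single packaged theorem there.
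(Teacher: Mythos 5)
Your proposal is correct and follows essentially the same route as the paper: normalize via the $(p-1)$-homogeneity of $-\Delta_p$ so that $\|g\|_\infty=1$, obtain a uniform $L^\infty$ bound by comparison (the paper uses the $p$-torsion function of $\Omega$ rather than a radial supersolution on a large ball, an immaterial difference), invoke Lieberman's global $C^{1,\alpha}$ estimate with constants depending only on $p,N,\Omega$, and scale back. The paper's stated conclusion is the gradient bound $\|\nabla u\|_\infty\le\mathcal{K}\|g\|_\infty^{1/(p-1)}$, which is exactly what your $C^{1,\alpha}$ estimate yields.
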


\textit{Then there exists a positive constant }$\mathcal{K}$\textit{,
depending only on }$p,N$\textit{ and }$\Omega,$\textit{ such that}%
\begin{equation}
\left\Vert \nabla u\right\Vert _{\infty}\leq\mathcal{K(}\left\Vert
g\right\Vert _{\infty})^{\frac{1}{p-1}}. \label{estimate}%
\end{equation}

\begin{proof}
Let us firstly assume that $\left\Vert g\right\Vert _{\infty}=1.$ By applying
a simple comparison principle, one can easily verify that $\left\vert
u\right\vert \leq\phi$ where $\phi\in W_{0}^{1,p}(\Omega)\cap L^{\infty
}(\Omega)$ is the $p$-torsion function of $\Omega,$ that is, $-\Delta_{p}%
\phi=1$ in $\Omega.$ Therefore,
\begin{equation}
\left\Vert u\right\Vert _{\infty}\leq L:=\left\Vert \phi\right\Vert _{\infty}.
\label{boundL}%
\end{equation}

It follows from global regularity results by Lieberman (see \cite{LIEBERMAN})
that there exist constants $\alpha\in(0,1)$ and $\mathcal{K}>0$ such that
$u\in C^{1,\alpha}(\overline{\Omega})$ and $\left\Vert u\right\Vert
_{1,\alpha}\leq\mathcal{K}$ and, moreover, $\alpha$ and $\mathcal{K}$ depend
only on $p,N$ and $\Omega.$ (In principle, these constansts could also depend
on the bound $L$ for $\left\Vert u\right\Vert _{\infty},$ but as we easily
see, the bound in (\ref{boundL}) is uniform with respect to $u$ whenever
$\left\Vert g\right\Vert _{\infty}=1$).

Since $\left\Vert \nabla u\right\Vert _{\infty}\leq\left\Vert u\right\Vert
_{1,\alpha}$ we obtain (\ref{estimate}) in the case $\left\Vert g\right\Vert
_{\infty}=1.$ If $0<\left\Vert g\right\Vert _{\infty}\neq1$ we apply the
previous argument to the function $v:=\dfrac{u}{(\left\Vert g\right\Vert
_{\infty})^{\frac{1}{p-1}}}$ since this function satisfies%
\begin{equation}
\left\{
\begin{array}
[c]{rcll}%
-\Delta_{p}v & = & g/\left\Vert g\right\Vert _{\infty} & \text{in }\ \Omega,\\
v & = & 0 & \text{on }\ \partial\Omega.
\end{array}
\right.  \label{v}%
\end{equation}
Thus, we obtain%
\[
\frac{\left\Vert \nabla u\right\Vert _{\infty}}{(\left\Vert g\right\Vert
_{\infty})^{\frac{1}{p-1}}}\leq\mathcal{K}.
\]

Therefore, we have proved (\ref{estimate}) for any $0\not \equiv g\in
L^{\infty}(\Omega)$ where the positive constant $\mathcal{K}$ depends only on
$p,$ $N$ and $\Omega.$ Obviously, (\ref{estimate}) remains valid for the same
constant $\mathcal{K}$ if $g\equiv0.$
\end{proof}\vspace{.5cm}

To solve problem (\ref{twotwo}) we define
\[
r:=a+b+1,\quad\omega(x):=\max_{i\in\left\{  1,2,3\right\}  }\omega_{i}(x)
\]
and denote by $\lambda_{1}$ and $u_{1}$ the first eigenpair of the
$p$-Laplacian with weight $\omega_{1}$, that is,
\[
\left\{
\begin{array}
[c]{rcll}%
-\Delta_{p}u_{1} & = & \lambda_{1}\omega_{1}u_{1}^{p-1} & \text{in \ }%
\Omega,\\
u_{1} & = & 0 & \text{on \ }\partial\Omega,
\end{array}
\right.
\]
with $u_{1}$ positive satisfying $\left\Vert u_{1}\right\Vert _{\infty}=1$.

Let also $\phi\in W_{0}^{1,p}(\Omega)\cap C^{1,\alpha}(\overline{\Omega})$ be
the solution of the problem
\[
\left\{
\begin{array}
[c]{rcll}%
-\Delta_{p}\phi & = & \omega & \text{in \ }\Omega\\
\phi & = & 0 & \text{on \ }\partial\Omega
\end{array}
\right.
\]
and define
\[
\gamma:=\frac{\mathcal{K}\left\Vert \omega\right\Vert _{\infty}^{\frac{1}%
{p-1}}}{\left\Vert \phi\right\Vert _{\infty}},
\]
where $\mathcal{K}$ satisfies (\ref{estimate}). We stress that $\gamma$
depends only on $\omega$, $p$, $N$ and $\Omega.$

\begin{lemma}
\label{Lem2}There exists a region $\mathcal{D}$ in the $\lambda\beta$-plane
such that, if $(\lambda,\beta)\in\mathcal{D}$ then
\begin{equation}
\lambda M^{q-1}+\beta\gamma^{b}M^{a+b}\leq(M/\left\Vert \phi\right\Vert
_{\infty})^{p-1}, \label{M}%
\end{equation}
for some positive constant $M.$
\end{lemma}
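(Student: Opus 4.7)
The plan is to define $\mathcal{D}$ in the most economical way possible: as the set of pairs $(\lambda,\beta)\in(0,\infty)^2$ for which the inequality (\ref{M}) admits at least one positive solution $M$. Concretely, for each fixed $M>0$ the condition (\ref{M}) is linear in $(\lambda,\beta)$ with positive coefficients, so it cuts out a non-empty triangular slice
\begin{equation*}
\mathcal{D}_M:=\Bigl\{(\lambda,\beta)\in(0,\infty)^2 : \lambda M^{q-1}+\beta\gamma^{b}M^{a+b}\leq (M/\|\phi\|_\infty)^{p-1}\Bigr\}
\end{equation*}
of the open first quadrant, and I would set $\mathcal{D}:=\bigcup_{M>0}\mathcal{D}_M$. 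With this definition, the very statement of the lemma becomes a tautology once we verify that $\mathcal{D}$ is non-empty.

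To establish non-emptiness I would pick one convenient value of $M$. Taking, for example, $M_0=\|\phi\|_\infty$ simplifies the right-hand side of (\ref{M}) to $1$, so the condition reduces to the linear inequality
\begin{equation*}
\lambda\,\|\phi\|_\infty^{q-1}+\beta\,\gamma^{b}\|\phi\|_\infty^{a+b}\leq 1.
\end{equation*}
Since the two coefficients on the left are strictly positive finite constants depending only on $\omega$, $p$, $N$ and $\Omega$, the set of $(\lambda,\beta)$ in the open first quadrant satisfying this inequality is an open triangle of positive area, and every one of its points lies in $\mathcal{D}_{M_0}\subset\mathcal{D}$.

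The argument has essentially no obstacle: it is a geometric observation about a union of half-planes indexed by $M$, and does not require any case split between $a+b\leq p-1$ and $a+b>p-1$. The only caveat, which is not needed for the present statement but will become relevant when applying Schauder's theorem later, is that the shape of $\mathcal{D}$ does depend on this dichotomy: when $a+b>p-1$ the super-linear term eventually dominates $M^{p-1}$, so for each fixed $\lambda>0$ the admissible range of $\beta$ is bounded, whereas when $a+b<p-1$ the region extends to infinity in $\beta$. A sharper description could be obtained by maximising
\begin{equation*}
\psi_\lambda(M):=\frac{M^{p-1}/\|\phi\|_\infty^{p-1}-\lambda M^{q-1}}{\gamma^{b}M^{a+b}}
\end{equation*}
over $M>0$ to produce the envelope $\beta\leq\sup_{M>0}\psi_\lambda(M)$, but this optimisation plays no role in the lemma as stated.
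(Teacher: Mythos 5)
Your proof is correct, and the region you construct is in fact the same set the paper arrives at; but you reach it by a genuinely different (and much more economical) route. You define $\mathcal{D}$ tautologically as $\bigcup_{M>0}\mathcal{D}_M$ and reduce the lemma to a non-emptiness check, which you settle by the single choice $M_0=\left\Vert \phi\right\Vert _{\infty}$. That is logically sufficient both for the statement and for its downstream use in Theorem \ref{maint}, where only the existence of \emph{some} $M$ satisfying (\ref{M}) is invoked. The paper instead divides (\ref{M}) by $M^{p-1}$, studies $\Phi(M)=\lambda AM^{q-p}+\beta BM^{r-p}$ with $r=a+b+1$, and splits into the cases $r>p$, $r=p$, $r<p$: when $r>p$ it minimises $\Phi$ explicitly to get the sharp condition $\lambda^{r-p}\beta^{p-q}\leq K$ together with a closed-form optimal $M$; when $r=p$ it gets $\beta<B^{-1}$; when $r<p$ it gets the whole open quadrant. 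What the paper's approach buys is an explicit, checkable description of $\mathcal{D}$ (displayed in (\ref{setD})) and an explicit formula for $M$ in each regime, which makes the existence region quantitative and visibly as large as this sub/supersolution method allows; your approach buys brevity and avoids the case analysis entirely, at the cost of leaving $\mathcal{D}$ implicit. Your closing remark about the dichotomy $a+b>p-1$ versus $a+b<p-1$, and the envelope $\beta\leq\sup_{M>0}\psi_{\lambda}(M)$, is precisely the optimisation the paper carries out in Case 1, so the two treatments reconnect there. Two cosmetic quibbles: the slice $\mathcal{D}_{M_0}$ defined with ``$\leq$'' is closed in the open quadrant rather than an open triangle, and it would be worth one line noting that $\left\Vert \phi\right\Vert _{\infty}>0$ and $\gamma>0$ (which follow from $\omega\not\equiv 0$) so that the coefficients in your linear inequality are indeed finite and strictly positive.
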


\begin{proof}
The inequality (\ref{M}) can be written as
\begin{equation}
\Phi(M):=\lambda AM^{q-p}+\beta BM^{r-p}\leq1, \label{h(M)}%
\end{equation}
where the coefficients
\begin{equation}
A=\left\Vert \phi\right\Vert _{\infty}^{p-1}\quad\text{and}\quad
B:=\mathcal{K}^{b}\left\Vert \phi\right\Vert _{\infty}^{p-1-b}\left\Vert
\omega\right\Vert _{\infty}^{\frac{b}{p-1}} \label{A,B}%
\end{equation}
clearly depend only on $\omega,$ $p$ and $\Omega.$

In order to determine an adequate value for $M$, we consider the possibilities
for the sign of $r-p.$

\noindent\textbf{Case 1:} $r-p>0.$ In this case we have
\[
\lim_{t\rightarrow0^{+}}\Phi(t)=\lim_{t\rightarrow+\infty}\Phi(t)=+\infty
\]
implying that $\Phi$ has a minimum value.
%which is attained at a critical point.
Since the only critical point $M$ of $\Phi$ is given by
%\[\beta B(r-p)M_{\ast}^{r-p}=\lambda A(p-q)M_{\ast}^{q-p}\]implying that%
\begin{equation}
M:=\left[  \frac{\lambda A(p-q)}{\beta B(r-p)}\right]  ^{\frac{1}{r-q}},
\label{M*case1}%
\end{equation}
we obtain
\[
\Phi(M)=\frac{\beta B(r-p)M^{r-p}}{p-q}+\beta BM^{r-p}=\beta BM^{r-p}\left(
\frac{r-q}{p-q}\right)  \leq\Phi(t)\text{ \ for all }t\geq0.
\]

Now we need to find sufficient conditions on $\lambda$ and $\beta$ in order to
obtain $\Phi(M)\leq1$ or, equivalently,%
\[
\beta B\left[  \frac{\lambda A(p-q)}{\beta B(r-p)}\right]  ^{\frac{r-p}{r-q}%
}\left(  \frac{r-q}{p-q}\right)  \leq1.
\]
After rewriting this last inequality we arrive at%
\begin{equation}
\lambda^{r-p}\beta^{p-q}\leq\left(  \frac{r-p}{A}\right)  ^{r-p}\left(
\frac{p-q}{B}\right)  ^{p-q}\frac{1}{(r-q)^{r-q}}=:K. \label{lbcase1}%
\end{equation}

Thus, if the positive parameters $\lambda$ and $\beta$ satisfy (\ref{lbcase1}%
), we conclude that $\overline{u}:=(M/\left\Vert \phi\right\Vert _{\infty
})\phi$ is a super-solution for (\ref{Tu}), where $M$ is given by
(\ref{M*case1}).

\noindent\textbf{Case 2:} $r-p=0.$ In this case $\Phi(t):=\lambda
At^{q-p}+\beta B$ is positive, strictly decreasing and satisfies%
\[
\lim_{t\rightarrow0^{+}}\Phi(t)=+\infty\quad\text{and}\quad\lim_{t\rightarrow
+\infty}\Phi(t)=\beta B.
\]
So, in order to have $\Phi(M)\leq1$ for some $M>0$ it is necessary that $\beta
B<1.$ Thus,
\begin{equation}
\text{if }\lambda>0\ \text{and}\ \beta<B^{-1} \label{lbcase2}%
\end{equation}
we can take $M>0$ such that $\Phi(M)=1,$ that is
\begin{equation}
M=\left(  \frac{\lambda A}{1-\beta B}\right)  ^{\frac{1}{p-q}}.
\label{M*case2}%
\end{equation}
Thus, if $\lambda$ and $\beta$ satisfy (\ref{lbcase2}) then $\overline
{u}=(M/\left\Vert \phi\right\Vert _{\infty})\phi$ where $M$ is given by
(\ref{M*case2}).

\noindent\textbf{Case 3:} $r-p<0.$ It follows from (\ref{h(M)}) that $\Phi$ is
strictly decreasing and
\[
\lim_{t\rightarrow0^{+}}\Phi(t)=+\infty\text{ \ and \ }\lim_{t\rightarrow
+\infty}\Phi(t)=0.
\]
Hence, for any positive parameters $\lambda$ and $\beta,$ there always exists
$M>0$ such that
\[
\Phi(M)=\lambda AM^{q-p}+\beta BM^{r-p}=1
\]
and for such a $M$ the function $\overline{u}=(M/\left\Vert \phi\right\Vert
_{\infty})\phi$ is a super-solution of (\ref{Tu}).
%for such a $M_{\ast}$ the function $\overline{u}:=\dfrac{M_{\ast}\phi}{\left\Vert \phi\right\Vert _{\infty}}$ is a super-solution (\ref{Tu}).

Summarizing, we have proved that there exists a positive constant $M$
satisfying (\ref{M}) whenever the pair $(\lambda,\beta)\ $belongs to the set
$\mathcal{D}$ defined by:%
\begin{equation}
\mathcal{D}=\left\{
\begin{array}
[c]{lll}%
\left\{  \lambda,\beta>0:\lambda^{r-p}\beta^{p-q}\leq K\right\}  & \text{if} &
r-p>0,\\
\left\{  \lambda,\beta>0:\beta<B^{-1}\right\}  & \text{if} & r-p=0,\\
\left\{  \lambda,\beta>0\right\}  & \text{if} & r-p<0,
\end{array}
\right.  \label{setD}%
\end{equation}

where $K$ and $B$ were determined by (\ref{lbcase1}) and (\ref{A,B}), respectively.
\end{proof}\vspace{.5cm}

For each $u\in C^{1}(\overline{\Omega})$ we define the continuous nonlinearity
$F^{u}:\overline{\Omega}\times\mathbb{R}\rightarrow\mathbb{R}$ by
\begin{equation}
F^{u}(x,\xi):=\lambda\omega_{1}\xi^{q-1}+\lambda\left(  h(x,u(x))-\omega
_{1}u(x)^{q-1}\right)  +\beta f(x,u(x),\nabla u(x)). \label{F^u}%
\end{equation}
and observe that $F^{u}(x,u)=\lambda h(x,u)+\beta f(x,u,\nabla u).$

Our main result of existence of solution for problem (\ref{twotwo}) is given by

\begin{theorem}
\label{maint} Assume that $h$ and $f$ are continuous and satisfy \textup{(H1)}
and \textup{(H2)}. There exists a region $\mathcal{D}$ in the $\lambda\beta
$-plane such that if $\left(  \lambda,\beta\right)  \in\mathcal{D}$ the
Dirichlet problem (\textup{\ref{twotwo}}) has at least one positive solution
$u$ satisfying, for some positive constants $\epsilon$ and $M$:
\[
\epsilon u_{1}\leq u\leq(M/\left\Vert \phi\right\Vert _{\infty})\phi\text{
\ \ and \ \ }\left\Vert \nabla u\right\Vert _{\infty}\leq\gamma M.
\]

\end{theorem}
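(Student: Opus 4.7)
The plan is to recast (\ref{twotwo}) as a fixed point problem in $C^1(\overline{\Omega})$ and apply Schauder's theorem on the invariant set suggested by Lemmas \ref{Lem1}--\ref{Lem2}. Fix $(\lambda,\beta)\in\mathcal{D}$, let $M$ be the constant furnished by Lemma \ref{Lem2}, set $\overline{u}:=(M/\|\phi\|_\infty)\phi$, and choose $\epsilon>0$ small enough that $\epsilon u_1\leq\overline{u}$ in $\overline{\Omega}$, $\epsilon\|\nabla u_1\|_\infty\leq\gamma M$, and $\epsilon^{p-q}\lambda_1\leq\lambda$; the last inequality is available because $p>q$. Define
\[
K:=\bigl\{u\in C^1(\overline{\Omega}):\epsilon u_1\leq u\leq\overline{u}\text{ on }\overline{\Omega},\ \|\nabla u\|_\infty\leq\gamma M\bigr\},
\]
a convex, closed, bounded subset of $C^1(\overline{\Omega})$ that contains $\epsilon u_1$.

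For $u\in K$ put $T(u):=v$, the unique weak solution in $W_0^{1,p}(\Omega)$ of $-\Delta_p v=g^u$ with $g^u(x):=\lambda h(x,u(x))+\beta f(x,u(x),\nabla u(x))$. Using \textup{(H1)}, \textup{(H2)}, $\|u\|_\infty\leq M$ and $\|\nabla u\|_\infty\leq\gamma M$,
\[
0\leq g^u(x)\leq\|\omega\|_\infty\bigl(\lambda M^{q-1}+\beta\gamma^b M^{a+b}\bigr)\leq\|\omega\|_\infty\bigl(M/\|\phi\|_\infty\bigr)^{p-1},
\]
where the last step is (\ref{M}). Lemma \ref{Lem1} then gives $\|\nabla v\|_\infty\leq\mathcal{K}\|g^u\|_\infty^{1/(p-1)}\leq\gamma M$. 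The pointwise envelope follows by the weak comparison principle for $-\Delta_p$: on the one hand $-\Delta_p\overline{u}=(M/\|\phi\|_\infty)^{p-1}\omega\geq g^u$, so $v\leq\overline{u}$; on the other,
\[
-\Delta_p(\epsilon u_1)=\epsilon^{p-1}\lambda_1\omega_1 u_1^{p-1}\leq\lambda\omega_1\epsilon^{q-1}u_1^{q-1}\leq\lambda\omega_1 u^{q-1}\leq g^u,
\]
using $u_1^{p-q}\leq 1$, the choice of $\epsilon$, and $u\geq\epsilon u_1$; hence $\epsilon u_1\leq v$. Thus $T(K)\subset K$.

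It then remains to close the Schauder argument. The uniform $L^\infty$-bound on $g^u$ combined with the $C^{1,\alpha}$-regularity theory behind Lemma \ref{Lem1} produces a uniform $C^{1,\alpha}(\overline{\Omega})$-bound on $T(K)$, so $T(K)$ is relatively compact in $C^1(\overline{\Omega})$ through the compact embedding $C^{1,\alpha}\hookrightarrow C^1$. Continuity of $T$ follows from the standard subsequence-plus-uniqueness argument, since $u_n\to u$ in $C^1$ forces $g^{u_n}\to g^u$ uniformly. Schauder's fixed point theorem then yields $u=T(u)\in K$, which is a positive solution of (\ref{twotwo}) automatically satisfying the announced estimates. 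I expect the most delicate point to be the two weak comparison arguments: they rest on the nonlinear comparison principle for the $p$-Laplacian, and their applicability is precisely what forces the calibration of $\epsilon$, $M$ and the region $\mathcal{D}$ given by Lemma \ref{Lem2}.
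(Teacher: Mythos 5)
Your proof is correct, but it takes a genuinely different (and in fact leaner) route than the paper at the key point of how the fixed-point operator is built. You freeze the \emph{entire} right-hand side, defining $T(u)$ as the solution of the $p$-Poisson problem $-\Delta_p v=g^u$ with $g^u(x)=\lambda h(x,u(x))+\beta f(x,u(x),\nabla u(x))\in L^\infty(\Omega)$; well-posedness is then immediate from the strict monotonicity of $-\Delta_p$, and the invariance of your set $K$ reduces to two comparisons between functions with ordered, fixed right-hand sides plus the gradient bound of Lemma \ref{Lem1}. The paper instead keeps the concave term in the unknown, solving $-\Delta_p U=F^u(x,U)$ with $F^u$ as in (\ref{F^u}); this forces it to run a sub/super-solution iteration with $\underline{u}=\epsilon u_1$ and $\overline{u}=(M/\Vert\phi\Vert_\infty)\phi$ to get existence, and then a separate Picone/D\'{\i}az--Saa argument (its Step 3, using that $F^u(x,\xi)/\xi^{p-1}$ is decreasing in $\xi$) to get the uniqueness needed for $T$ to be well defined and continuous. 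Both routes end with the same Schauder argument on the same invariant set, and your quantitative chains (the bound $g^u\leq\Vert\omega\Vert_\infty(M/\Vert\phi\Vert_\infty)^{p-1}$ via (\ref{M}), the resulting $\Vert\nabla v\Vert_\infty\leq\gamma M$, and the sub-solution chain using $\epsilon^{p-q}\lambda_1\leq\lambda$ and $\Vert u_1\Vert_\infty=1$) coincide with the paper's computations in its Steps 1 and 2. One small point worth making explicit: the existence of $\epsilon$ with $\epsilon u_1\leq\overline{u}$ is not a pointwise triviality, since both functions vanish on $\partial\Omega$; the clean justification is the paper's one-line comparison $-\Delta_p(\epsilon u_1)=\epsilon^{p-1}\lambda_1\omega_1u_1^{p-1}\leq\epsilon^{p-1}\lambda_1\omega\leq(M/\Vert\phi\Vert_\infty)^{p-1}\omega=-\Delta_p\overline{u}$ for $\epsilon\leq M\lambda_1^{-1/(p-1)}/\Vert\phi\Vert_\infty$. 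With that line added, your argument is complete and arguably simpler, since it renders the paper's Steps 1 and 3 unnecessary.
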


\begin{proof}
Let $(\lambda,\beta)\in\mathcal{D}$ where the region $\mathcal{D}$ is defined
by (\ref{setD}) and take $M>0$ satisfying (\ref{M}) from Lemma \ref{Lem2}. Let
us define the subset%
\begin{equation}
\mathcal{F}:=\left\{  u\in C^{1}(\overline{\Omega}):\epsilon u_{1}\leq
u\leq(M/\left\Vert \phi\right\Vert _{\infty})\phi\text{ \ and \ }\left\Vert
\nabla u\right\Vert _{\infty}\leq\gamma M\right\}  \subset C^{1}%
(\overline{\Omega}) \label{setF}%
\end{equation}
where%
\[
0<\epsilon\leq\min\left\{  \left(  \lambda/\lambda_{1}\right)  ^{\frac{1}%
{p-q}},(M\lambda_{1}^{-\frac{1}{p-1}})/\left\Vert \phi\right\Vert _{\infty
}\right\}  .
\]

We divide this proof into five steps.

\noindent\textbf{Step 1. } We prove that for each $u\in\mathcal{F}$
there
exists a positive solution $U$ of the problem
\begin{equation}
\left\{
\begin{array}
[c]{rcll}%
-\Delta_{p}U & = & F^{u}(x,U) & \text{in \ }\Omega\\
U & = & 0 & \text{on \ }\partial\Omega
\end{array}
\right.  \label{Tu}%
\end{equation}
satisfying
\[
\epsilon u_{1}\leq u\leq(M/\left\Vert \phi\right\Vert _{\infty})\phi.
\]
In order to do this we firstly verify that the functions
\[
\underline{u}:=\epsilon u_{1}\text{ \ \ \ and \ \ }\overline{u}:=(M/\left\Vert
\phi\right\Vert _{\infty})\phi
\]
constitute an ordered pair of sub- and super-solutions of (\ref{Tu}). This fact
implies, by applying a standard iteration process, that there exists a weak
solution $U$ of (\ref{Tu}) satisfying $\underline{u}\leq U\leq\overline{u}.$

Since $\overline{u}$ satisfies
\begin{equation}
\left\{
\begin{array}
[c]{rcll}%
-\Delta_{p}\overline{u} & = & \omega(M/\left\Vert \phi\right\Vert _{\infty
})^{p-1}, & \text{in \ }\Omega\\
\overline{u} & = & 0, & \text{on \ }\partial\Omega
\end{array}
\right.  \label{plapsuper}%
\end{equation}
$u\in\mathcal{F}$ and $\left\Vert \overline{u}\right\Vert _{\infty}=M,$ we
obtain from (\ref{M}) of Lemma \ref{Lem2} that
\begin{align*}
F^{u}(x,\overline{u})  &  \leq\lambda\omega_{1}\overline{u}^{q-1}%
+\lambda\left(  \omega_{2}-\omega_{1}\right)  u^{q-1}+\beta\omega_{3}%
u^{a}\left\vert \nabla u\right\vert ^{b}\\
&  \leq\lambda\omega_{1}M^{q-1}+\lambda\left(  \omega_{2}-\omega_{1}\right)
M^{q-1}+\beta\omega_{3}M^{a}\gamma^{b}M^{b}\\
&  \leq\omega\left(  \lambda M^{q-1}+\beta\gamma^{b}M^{a+b}\right) \\
&  \leq\omega(M/\left\Vert \phi\right\Vert _{\infty})^{p-1}=-\Delta
_{p}\overline{u}.
\end{align*}
Hence, the weak comparison principle gives that $\overline{u}$ is a
super-solution of (\ref{Tu}).

Now, since $\underline{u}=0$ on $\partial\Omega$ and
\[
-\Delta_{p}\underline{u}=\lambda_{1}\omega_{1}\underline{u}^{p-1}\leq
\lambda_{1}\omega_{1}\epsilon^{p-q}\underline{u}^{q-1}\leq\lambda\omega
_{1}\underline{u}^{q-1}\leq F^{u}(x,\underline{u})\text{ \ in }\Omega,
\]
we obtain from the weak comparison principle again that $\underline{u}$ is a
sub-solution for (\ref{Tu}). This principle still produces the ordering
$\underline{u}\leq\overline{u}$ in $\overline{\Omega}$, since
\[
-\Delta_{p}\underline{u}\leq\epsilon^{p-1}\lambda_{1}\omega_{1}\leq
\epsilon^{p-1}\lambda_{1}\omega\leq\left(  M_{\ast}/\left\Vert \phi\right\Vert
_{\infty}\right)  ^{p-1}\omega=-\Delta_{p}\overline{u}.
\]

\noindent\textbf{Step 2.} Now we complete the verification that
$U\in\mathcal{F}$ by proving that $\left\vert \nabla U\right\vert \leq\gamma
M.$ Indeed, it follows from (\ref{estimate}) of Lemma \ref{Lem1} that%
\[
\left\Vert \nabla U\right\Vert _{\infty}^{p-1}\leq\mathcal{K}^{p-1}\left\Vert
F^{u}(x,U)\right\Vert _{\infty}%
\]
and from (H1), (H2) and (\ref{M}) that
\[%
\begin{array}
[c]{lll}%
0\leq & F^{u}(x,U) & =\lambda\omega_{1}U^{q-1}+\lambda\left(  h(x,u)-\omega
_{1}u^{q-1}\right)  +\beta f(x,u,\nabla u)\\
&  & \leq\lambda\omega_{1}U^{q-1}+\lambda\left(  \omega_{2}-\omega_{1}\right)
u^{q-1}+\beta\omega_{3}u^{a}\left\vert \nabla u\right\vert ^{b}\\
&  & \leq\lambda\omega_{2}^{q-1}(M\phi/\left\Vert \phi\right\Vert _{\infty
})^{q-1}+\beta\omega_{3}(M\phi/\left\Vert \phi\right\Vert _{\infty}%
)^{a}(\gamma M)^{b}\\
&  & \leq\omega(\lambda M^{q-1}+\beta\gamma^{b}M^{a+b})\\
&  & \leq\left\Vert \omega\right\Vert _{\infty}(M/\left\Vert \phi\right\Vert
_{\infty})^{p-1}=(\gamma M/\mathcal{K})^{p-1}.
\end{array}
\]

\noindent\textbf{Step 3.} We prove the uniqueness of $U$. It is a
consequence of a result proved in \cite{DiazSaa}, but it also follows from
Picone's inequality (see \cite{Allegretto})
\[
\left\vert \nabla u\right\vert ^{p}\geq\left\vert \nabla v\right\vert
^{p-2}\nabla v\cdot\nabla\left(  \frac{u}{v}\right),
\]
which is valid for all differentiable $u\geq0$ and $v>0.$ In fact, if $U$ and
$V$ are both \emph{positive} solutions of problem (\ref{Tu}), we have
\[
\int_{\Omega}\frac{F^{u}(x,U)U^{p}}{U^{p-1}}dx=\int_{\Omega}|\nabla
U|^{p}dx\geq\int_{\Omega}|\nabla V|^{p-2}\nabla V\cdot\nabla\left(
\frac{U^{p}}{V^{p-1}}\right)  dx=\int_{\Omega}\frac{F^{u}(x,V)U^{p}}{V^{p-1}%
}dx,
\]
from what follows
\[
\int_{\Omega}\left(  \frac{F^{u}(x,U)}{U^{p-1}}-\frac{F^{u}(x,V)}{V^{p-1}%
}\right)  U^{p}dx\geq0.
\]
An analogous inequality is also true for $V$:
\[
-\int_{\Omega}\left(  \frac{F^{u}(x,U)}{U^{p-1}}-\frac{F^{u}(x,V)}{V^{p-1}%
}\right)  V^{p}dx\geq0,
\]
and so
\begin{equation}
\int_{\Omega}\left(  \frac{F^{u}(x,U)}{U^{p-1}}-\frac{F^{u}(x,V)}{V^{p-1}%
}\right)  (U^{p}-V^{p})dx\geq0. \label{+}%
\end{equation}
Since $q<p$, it follows from (\ref{F^u}) that $F^{u}(x,\xi)/\xi^{p-1}$ is
decreasing with respect to $\xi.$ Therefore, the last integrand is
non-positive and so (\ref{+}) yields
\[
\left(  \frac{F^{u}(x,U)}{U^{p-1}}-\frac{F^{u}(x,V)}{V^{p-1}}\right)
(U^{p}-V^{p})=0\text{ \ in }\Omega
\]
from what we obtain $U=V.$

\noindent\textbf{Step 4.} The regularity $U\in C^{1,\alpha}(\overline{\Omega
})$ for some $0<\alpha<1$ uniform with respect to $u\in\mathcal{F}$ follows from the
uniform boundedness of both $U$ and $\left\vert \nabla U\right\vert $ together
with classical results (see \cite{DiBenedetto, LIEBERMAN, Tolks}). We
emphasize that the bounds for $U$ and $\left\vert \nabla U\right\vert $ are
determined by the positive constant $M$ which, in its turn, is fixed according
with the pair $(\lambda,\beta)\in\mathcal{D}.$

\noindent\textbf{Step 5.} In this last step we complete the proof. As
consequence of the previous steps the operator
\[%
\begin{array}
[c]{rcl}%
T\colon\mathcal{F}\subset C^{1}(\overline{\Omega}) & \longrightarrow &
C^{1,\alpha}(\overline{\Omega})\cap W_{0}^{1,p}(\Omega)\subset C^{1}%
(\overline{\Omega})\\
u & \longrightarrow & U,
\end{array}
\]
is well-defined, $U$ being the unique positive solution of (\ref{Tu}).
Moreover, it follows clearly from the compactness of the immersion
$C^{1,\alpha}(\overline{\Omega})\hookrightarrow C^{1}(\overline{\Omega})$ that
$T$ is continuous and compact. Thus, since $T$ leaves invariant the set
$\mathcal{F}$ defined by (\ref{setF}) and this set is bounded and convex we
can apply Schauder's Fixed Point Theorem to obtain a fixed point $u$ for $T.$
Of course, such a fixed point $u$ satisfies%
\[
\left\{
\begin{array}
[c]{rcll}%
-\Delta_{p}u & = & F^{u}(x,u)=\lambda h(x,u)+\beta f(x,u,\nabla u) &
\text{in\ }\Omega\\
u & = & 0 & \text{on \ }\partial\Omega.
\end{array}
\right.
\]

\end{proof}

\noindent \textbf{Acknowledgement} The authors thank B. Sirakov for useful
conversations.

\end{document}